\def\@setcopyright{\@empty}
\newcommand{\Si}[1]{(1-#1^2)}
\newcommand{\Co}[1]{\cos^4\frac{#1}2}
\newcommand{\allp}{1\le p\le\infty}
\newcommand{\Px}[1]{P_{#1}^{(2,2)}}
\newcommand{\E}{E_n(f)_{p,\alpha}}
\newcommand{\Epar}[2]{E_{#1}\left(#2\right)_{p,\alpha}}
\newcommand{\T}[3]{\tau_{#1}^{#2}\left(#3\right)}
\newcommand{\hatT}[3]{\hat\tau_{#1}^{#2}\left(#3\right)}
\newcommand{\w}{\hat\omega(f,\delta)_{p,\alpha}}
\newcommand{\wpar}[1]{\hat\omega\left(#1\right)_{p,\alpha}}
\newcommand{\norm}[1]{\left\|#1\right\|_{p,\alpha}}
\newcommand{\normpar}[2]{\left\|#1\right\|_{#2}}
\newcommand{\Py}[1]{P_{#1}^{(0,4)}}
\newcommand{\Lp}{L_{p,\alpha}}
\newcommand{\Copar}[2]{\left(\cos \frac{#1}2\right)^{4#2}}
\newcommand{\prn}[1]{\left(#1\right)}
\newcommand{\brc}[1]{\left\{#1\right\}}
\newcommand{\numericset}[1]{\mathbb #1}
\newcommand{\numN}{\numericset N}
\newtheorem{theorem}{Theorem}[section]
\newtheorem*{theorem A}{Theorem A}
\newtheorem*{theorem B}{N\"olker's Theorem}
\newtheorem{lemma}{Lemma}[section]
\theoremstyle{remark}
\theoremstyle{remark}
\theoremstyle{definition}
\numberwithin{equation}{section}
\def\({\left ( }
\def\){\right )}
\def\<{\left < }
\def\>{\right >}
\newcounter{const}
\numberwithin{const}{theorem}
\numberwithin{const}{lemma}
\newcommand{\Cn}[1][]{%
	\stepcounter{const}C_{\theconst}%
  \@ifnotempty{#1}{%
    \newcounter{#1}\setcounter{#1}{\arabic{const}}}}
\newcommand{\lastC}{C_{\theconst}}
\newcommand{\prevC}[1][1]{%
	{\countdef\n=255
	 \n=\theconst
	 \advance\n by-#1
	 C_{\number\n}}}
\numberwithin{equation}{section}
\numberwithin{theorem}{section}
\renewcommand{\theconst}{\arabic{const}}
\newsavebox\boxcst
\newcommand\boxconst[1]{\sbox\boxcst{$\Cn\label{#1}$}}
\newcommand\boxedconst{\usebox{\boxcst}}
\begin{document}

\title[Approximating some classes of functions]{%
	Approximating classes of functions
    defined by a generalised modulus of smoothness%
}

\author{Faton M.~Berisha}
\address{F.~M.\ Berisha\\
	Faculty of Mathematics and Sciences\\
	University of Prishtina\\
	N\"ena Terez\"e~5\\
	10000 Prishtin\"e\\
	Kosovo%
}
\email{faton.berisha@uni-pr.edu}

\author{Nimete Sh.~Berisha}
\address{N.~Sh.\ Berisha\\
	Faculty of Mathematics and Sciences\\
	University of Prishtina\\
	N\"ena Terez\"e~5\\
	10000 Prishtin\"e\\
	Kosovo%
}
\email{nimete.berisha@gmail.com}

\keywords{%
	generalised Lipschitz classes,
	generalised modulus of smoothness,
	asymmetric generalised shift operator,
	theorem of coincidence of classes%
}
\subjclass{Primary 41A35, Secondary 41A50, 42A16.}
\date{}

\begin{abstract}
	In the present paper,
	we use a generalised shift operator
	in order to define a generalised modulus of smoothness.
	By its means,
	we define generalised Lipschitz classes of functions,
	and we give their constructive characteristics.
	Specifically,
	we prove certain direct and inverse types theorems
	in approximation theory
	for best approximation
	by algebraic polynomials.
\end{abstract}

\maketitle

\section{Introduction}

In~\cite{p-berisha:anal-99},
a generalised shift operator was introduced,
by its means the generalised modulus of smoothness
was defined,
and Jackson's and its converse type theorems
were proved for this modulus.

In the present paper,
we make use of this modulus of smoothness
to define generalised Lipschitz classes of functions.
We prove the coincidence
of such a generalised Lipshcitz class
wth the class of functions
having a given order of decrease
of best approximation by algebraic polynomials.

\section{Definitions}

By $L_p[a,b]$ we denote the set of functions~$f$
such that for $1\le p<\infty$
$f$~is a measurable function on the segment $[a,b]$
and
\begin{displaymath}
	\normpar f p
	=\biggl(\int_a^b|f(x)|^p\,dx\biggr)^{1/p}<\infty,
\end{displaymath}
and for $p=\infty$ the function~$f$ is continuos
on the segment $[a,b]$
and
\begin{displaymath}
	\normpar f\infty=\max_{a\le x\le b}|f(x)|.
\end{displaymath}
In case that $[a,b]=[-1,1]$ we simply write~$L_p$
instead of $L_p[-1,1]$.

Denote by~$\Lp$ the set of functions~$f$
such that
$f(x)\*\Si{x}^\alpha\in L_p$,
and put
\begin{displaymath}
	\norm f=\|f(x)\Si{x}^\alpha\|_p.
\end{displaymath}

Denote by $\E$ the best approximation
of a function $f\in\Lp$
by algebraic polynomials of degree not greater than $n-1$,
in $\Lp$ metrics,
i.e.,
\begin{displaymath}
	\E=\inf_{P_n}\norm{f-P_n},
\end{displaymath}
where~$P_n$ is an algebraic polynomial
of degree not greater than $n-1$.

By~$E(p,\alpha,\lambda)$
we denote the class of functions $f\in\Lp$
satisfying the condition
\begin{displaymath}
	\E\le Cn^{-\lambda},
\end{displaymath}
where $\lambda>0$
and~$C$ is a constant
not depending on~$n$ $(n\in\numN)$.

Define generalised shift operator
$\hatT t{}{f,x}$
by
\begin{displaymath}
	\hatT t{}{f,x}=\frac1 {\pi\Si x\Co t}
		\int_0^\pi B_{\cos t}(x,\cos\varphi,R)
			f(R)\,d\varphi,
\end{displaymath}
where
\begin{align}\label{eq:R-B}
	R 			&=x\cos t-\sqrt{1-x^2}\sin t\cos\varphi, \notag\\
	B_y(x,z,R) 	&=2\Bigl(\sqrt{1-x^2}y+xz\sqrt{1-y^2}\\
				& \quad
		+\sqrt{1-x^2}(1-y)\Si z\Bigr)^2-\Si R. \notag
\end{align}

For a function $f\in\Lp$,
define the generalised modulus of smoothness
by
\begin{displaymath}
	\w=\sup_{|t|\le\delta}
		\norm{\hatT t{}{f,x}-f(x)}.
\end{displaymath}

Consider the class $H(p,\alpha,\lambda)$
of functions $f\in\Lp$
satisfying the condition
\begin{displaymath}
	\w\le C\delta^\lambda,
\end{displaymath}
where $\lambda>0$
and~$C$ is a constant
not depending on~$\delta$.

Put $y=\cos t$, $z=\cos\varphi$ in the operator $\hatT t{}{f,x}$,
denote it by~$\T y{}{f,x}$
and rewrite it in the form
\begin{displaymath}
	\T y{}{f,x}=\frac4{\pi\Si x(1+y)^2}
		\int_{-1}^1 B_y(x,z,R)f(R)\frac{dz}{\sqrt{1-z^2}},
\end{displaymath}
where~$R$ and $B_y(x,z,R)$ are defined in~\eqref{eq:R-B}.

By~$P_\nu^{(\alpha,\beta)}(x)$ $(\nu=0,1,\dotsc)$
we denote the Jacobi polynomials, 
i.e., the algebraic polynomials of degree~$\nu$,
orthogonal with the weight function
$(1-x)^{\alpha}(1+x)^{\beta}$
on the segment~$[-1,1]$,
and normed by the condition
\begin{displaymath}
	P_\nu^{(\alpha,\beta)}(1)=1 \quad(\nu=0,1,\dotsc).
\end{displaymath}

Denote by~$a_n(f)$ the Fourier--Jacobi coefficients
of a function~$f$,
integrable with the weight function $\Si{x}^2$
on the segment~$[-1,1]$,
with respect to the system of Jacobi polynomials
$\brc{\Px n(x)}_{n=0}^\infty$,
i.e.,
\begin{displaymath}
	a_n(f)=\int_{-1}^1f(x)\Px n(x)\Si{x}^2\,dx\quad(n=0,1,\dotsc).
\end{displaymath}

\section{Auxiliary statements}

In order to prove our results
we need the following theorem.

\begin{theorem}\label{th:jackson}
	Let the numbers~$p$ and~$\alpha$ be such that $\allp$;
	\begin{alignat*}2
		1/2 			   &<\alpha\le1
		  &\quad &\text{for $p=1$},\\
		1-\frac1{2p} &<\alpha<\frac32-\frac1{2p}
		  &\quad &\text{for $1<p<\infty$},\\
		1 				   &\le\alpha<3/2
		  &\quad &\text{for $p=\infty$}.
	\end{alignat*}
	If $f\in\Lp$,
	then for every natural number~$n$
	\begin{displaymath}
		\Cn\E\le\wpar{f,1/n},
	\end{displaymath}
	where the positive constant~$\lastC$
	does not depend on~$f$ and~$n$.
\end{theorem}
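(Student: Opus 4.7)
The plan is to prove this Jackson-type inequality by constructing, from the generalised shift $\hatT t{}{\cdot,\cdot}$, a polynomial approximation operator of degree at most $n-1$ whose error is dominated by $\wpar{f,1/n}$. The first step is to show that the shift acts diagonally on the Jacobi system $\brc{\Px\nu}$: using the explicit kernel $B_y(x,z,R)$ from~\eqref{eq:R-B} together with a product formula for Jacobi polynomials, one expects to obtain
\[
    \hatT t{}{\Px\nu,x} = \lambda_\nu(\cos t)\, \Px\nu(x),
\]
with $\lambda_\nu(1)=1$ and $|\lambda_\nu(\cos t)|\le 1$. This yields two crucial consequences: integrating $\hatT t{}{f,x}$ against any polynomial in $\cos t$ of degree $n-1$ produces a polynomial in $x$ of the same degree, and the $\Lp$-boundedness $\norm{\hatT t{}{f,\cdot}}\le C\norm f$, uniform in $t$, can be deduced within the numerical ranges on $\alpha$ specified in the hypothesis by appealing to weighted estimates for Fourier--Jacobi partial sums.

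Second, I would introduce a Jackson-type kernel $K_n(t)$: a nonnegative polynomial in $\cos t$ of degree at most $n-1$, normalised by $\int_0^\pi K_n(t)\,d\nu(t)=1$ for an appropriate measure $d\nu$, and obeying the first-moment bound $\int_0^\pi t\, K_n(t)\,d\nu(t)\le C/n$. Such kernels are standard, typically constructed as the square of a Dirichlet-type polynomial. Setting
\[
    J_n(f,x) = \int_0^\pi \hatT t{}{f,x}\,K_n(t)\,d\nu(t),
\]
Step~1 ensures that $J_n(f)$ is an algebraic polynomial in $x$ of degree at most $n-1$. The error estimate then follows by Minkowski's integral inequality,
\[
    \norm{f-J_n(f)}
        \le \int_0^\pi \norm{\hatT t{}{f,\cdot}-f(\cdot)}\,K_n(t)\,d\nu(t)
        \le \int_0^\pi \wpar{f,t}\,K_n(t)\,d\nu(t),
\]
combined with the standard semi-additivity $\wpar{f,\lambda t}\le(1+\lambda)\wpar{f,t}$ and the moment bound on $K_n$, yielding $\E\le\norm{f-J_n(f)}\le C\wpar{f,1/n}$.

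The principal technical obstacle is expected to be the uniform $\Lp$-boundedness of the shift operator in Step~1. The kernel $B_y(x,z,R)$ in~\eqref{eq:R-B} is not manifestly nonnegative, and the weight $\Si x^\alpha$ in the $\Lp$-norm mixes delicately with the nonlinear change of variables determining $R$. It is precisely this boundedness requirement that accounts for the seemingly peculiar numerical ranges of $\alpha$ appearing in the hypothesis, and its verification will likely require sharp Jacobi-weight asymptotics together with Muckenhoupt-type weight techniques adapted to the interval $[-1,1]$.
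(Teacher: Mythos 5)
The paper contains no proof of Theorem~\ref{th:jackson}: it is imported as a known result from \cite{p-berisha:anal-99} (and, in more general form, \cite{p-berisha:fundam-99}), so there is no in-paper argument to compare yours against. That said, your outline is the standard Jackson machinery for a generalised shift, and it is consistent with the two auxiliary facts the paper does quote from the same source: the diagonal action $\T y{}{\Px\nu,x}=\Px\nu(x)\Py\nu(y)$ (property~\ref{it:properties-tau-3} of Lemma~\ref{lm:properties-tau}) and the weighted bound of Lemma~\ref{lm:bound-tau}. In particular your degree count for $J_n(f)$ is correct provided $d\nu$ is the orthogonality measure of the system $\brc{\Py\nu}$, i.e.\ the weight $(1+y)^4$ on $[-1,1]$, so that property~\ref{it:properties-tau-6} of Lemma~\ref{lm:properties-tau} annihilates all Fourier--Jacobi coefficients of $J_n(f)$ of index $\ge n$.

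Two of your specific claims are false as stated, and they are exactly where the real work lies. First, the eigenvalues are $\lambda_\nu(\cos t)=\Py\nu(\cos t)$, and with the normalisation $\Py\nu(1)=1$ one has $\left|\Py\nu(-1)\right|=\binom{\nu+4}{4}\sim\nu^4$, so $|\lambda_\nu(\cos t)|\le1$ fails badly near $t=\pi$. Correspondingly, the shift is \emph{not} uniformly bounded in $t$: Lemma~\ref{lm:bound-tau} gives only $\norm{\hatT t{}{f,x}}\le\frac C{\Co t}\norm f$, which degenerates as $t\to\pi$ --- this is already visible in the normalising factor $\Co t$ in the definition of $\hatT t{}{f,x}$. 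Your Minkowski step therefore does not close as written; the usual repair is to build the compensating factor $\Co t$ into the Jackson kernel $K_n$ (or to concentrate $K_n$ on $[0,\pi/2]$ and estimate the tail separately). A further, smaller point: this modulus behaves like one of second order (as the saturation restriction $0<\lambda<2$ in Theorem~\ref{th:EsubH-tau} indicates), so the semi-additivity you invoke should be expected in the form $\wpar{f,\lambda t}\le C(1+\lambda)^2\wpar{f,t}$ and must be paired with a second-moment bound $\int_0^\pi t^2K_n(t)\,d\nu(t)\le Cn^{-2}$ rather than the first-moment bound you state. With these corrections the scheme does deliver $\E\le\norm{f-J_n(f)}\le C\wpar{f,1/n}$, and your closing paragraph correctly identifies the weighted boundedness as the technical core --- but it is the $t\to\pi$ degeneracy, not Muckenhoupt theory alone, that forces the modification of the kernel.
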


Theorem~\ref{th:jackson}
was proved in~\cite{p-berisha:anal-99}
and,
in more general form,
in~\cite{p-berisha:fundam-99}.
It is known as a Jackson's type theorem.

We also need the following lemmas.

\begin{lemma}\label{lm:properties-tau}
	The operator~$\T y{}{f,x}$ has the following properties:
	\begin{enumerate}
	\item\label{it:properties-tau-1}
		it is linear,
	\item\label{it:properties-tau-2}
		$\T1{}{f,x}=f(x)$,
	\item\label{it:properties-tau-3}
		$\T y{}{\Px\nu,x}=\Px\nu(x)\Py\nu(y)
		\quad(\nu=0,1,\dotsc)$,
	\item\label{it:properties-tau-4}
		$\T y{}{1,x}=1$,
	\item\label{it:properties-tau-6}
		$a_n(\T y{}{f,x})=a_n(f)\Py n(y)
			\quad(n=0,1,\dotsc)$.
	\end{enumerate}
\end{lemma}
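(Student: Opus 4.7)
The strategy is to prove the properties in the order (1), (2), (3), treating (4) and (5) as corollaries of (3). Property~(3) is the substantive identity; the rest reduces to routine manipulation.

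Property~(1) is immediate from the definition of $\T y{}{f,x}$, since the integral is linear in $f$ and the kernel $B_y(x,z,R)$ does not depend on $f$. For property~(2), set $y=\cos t=1$, so that $t=0$, $\sin t=0$, and in~\eqref{eq:R-B} one obtains $R=x$ and
\begin{displaymath}
	B_1(x,z,x)=2\bigl(\sqrt{1-x^2}\bigr)^2-\Si x=\Si x.
\end{displaymath}
The prefactor $4/\bigl(\pi\Si x(1+y)^2\bigr)$ reduces to $1/(\pi\Si x)$ at $y=1$, and $\int_{-1}^1 dz/\sqrt{1-z^2}=\pi$, so $\T1{}{f,x}=f(x)$.

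Property~(3), the product formula for $\T y{}{\Px\nu,x}$, is the nontrivial core of the lemma; the specific form of $B_y$ in~\eqref{eq:R-B} has been tailored so that this identity holds. The plan is to fix $x$ and treat both sides as polynomials in $y$ of degree at most $\nu$: a direct but tedious expansion, exploiting the explicit form of $R$ and $B_y$ in~\eqref{eq:R-B}, shows that $\T y{}{\Px\nu,x}$ is indeed a polynomial in $y$ of degree $\nu$. This polynomial equals $\Px\nu(x)$ at $y=1$ by property~(2), and a further computation shows that, as a function of $y$, it satisfies the Jacobi differential equation with parameters $(0,4)$ and eigenvalue $\nu(\nu+5)$. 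By uniqueness of the polynomial solution normalised at $y=1$, it must coincide with $\Px\nu(x)\Py\nu(y)$. This is essentially the route followed in the underlying construction of the generalised shift operator~\cite{p-berisha:anal-99}, from which the identity can also be obtained by direct quotation.

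Property~(4) is the case $\nu=0$ of~(3), since $\Px 0\equiv\Py 0\equiv 1$. For~(5), expand $f$ in its Fourier--Jacobi series $\sum_kc_k\Px k$ with respect to $\{\Px k\}_{k\ge 0}$, apply~(1) and~(3) term by term to obtain $\T y{}{f,x}=\sum_kc_k\Px k(x)\Py k(y)$, and use the orthogonality of $\{\Px k\}$ with weight $\Si x^2$ in the integral defining $a_n(\T y{}{f,\cdot})$: only the $n$-th term survives, producing the factor $\Py n(y)$ outside, and the relation $c_n\int_{-1}^1(\Px n(x))^2\Si x^2\,dx=a_n(f)$ delivers the claimed formula. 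Convergence of the series under the integral sign can be justified by truncating to polynomial partial sums and passing to the limit using the continuity of $\T y{}{\cdot,x}$. The main obstacle is clearly~(3); once it is secured, the remaining properties follow by short routine arguments.
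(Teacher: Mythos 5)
The paper does not actually prove Lemma~3.1 --- it simply states that it ``was proved in~\cite{p-berisha:anal-99}'' --- so there is no in-paper argument to compare yours against. Your elementary verifications are correct: linearity is immediate; at $y=1$ one indeed has $R=x$, $B_1(x,z,x)=2\Si x-\Si x=\Si x$, the prefactor becomes $1/(\pi\Si x)$, and $\int_{-1}^1 dz/\sqrt{1-z^2}=\pi$, giving~(2); property~(4) is the case $\nu=0$ of~(3); and the reduction of~(5) to (1) and~(3) via orthogonality is the right mechanism. The eigenvalue $\nu(\nu+5)$ for the $(0,4)$ Jacobi equation is also correct.

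The genuine gap is at property~(3), which you yourself identify as the core. Your plan (show that $\T y{}{\Px\nu,x}$ is a polynomial of degree~$\nu$ in~$y$, evaluate at $y=1$, verify the $(0,4)$ Jacobi differential equation, invoke uniqueness) is the standard one, but none of its three steps is carried out. Even the first is not routine: $R=xy-\sqrt{1-x^2}\sqrt{1-y^2}\,z$ introduces $\sqrt{1-y^2}$, so one must check that after integration in~$z$ only even powers survive \emph{and} that the resulting polynomial is divisible by $(1+y)^2$ so as to cancel the prefactor; the verification of the differential equation is a separate substantial computation. As written, the proposal ultimately falls back on quoting~\cite{p-berisha:anal-99}, i.e.\ it does no more than the paper itself does. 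A second, smaller issue concerns~(5): expanding $f$ in its Fourier--Jacobi series and integrating term by term requires mean convergence of the Jacobi expansion in $\Lp$, which holds only for a restricted range of~$p$ and fails in general for $p=1,\infty$. The safe route is to prove~(5) for algebraic polynomials~$f$ (a finite sum, where (1) and~(3) apply directly) and then extend to all $f\in\Lp$ by density of polynomials, using Lemma~3.2 for the continuity of $f\mapsto\T y{}{f,x}$ and the bound $|a_n(f)|\le\norm f\,\normpar{\Px n(x)\Si x^{2-\alpha}}{p'}$ for the continuity of $f\mapsto a_n(f)$.
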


Lemma~\ref{lm:properties-tau}
was proved in~\cite{p-berisha:anal-99}

\begin{lemma}\label{lm:bound-tau}
	Let the numbers~$p$ and~$\alpha$ be such that $\allp$;
	\begin{alignat*}2
		1/2				&<\alpha\le1
		  &\quad &\text{for $p=1$},\\
		1-\frac1{2p}	&<\alpha<\frac32-\frac1{2p}
		  &\quad &\text{for $1<p<\infty$},\\
		1				&\le\alpha<3/2
		  &\quad &\text{for $p=\infty$}.
	\end{alignat*}
	If $f\in\Lp$,
	then
	\begin{displaymath}
		\norm{\hatT t{}{f,x}}\le\frac C{\Co t}\norm f,
	\end{displaymath}
	where constant~$C$ does not depend on~$f$ and~$t$.
\end{lemma}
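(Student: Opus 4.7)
The plan is to prove the endpoint cases $p=1$ and $p=\infty$ by direct kernel estimates and then obtain $1<p<\infty$ by Riesz--Thorin interpolation, after rewriting $\hat\tau_t$ as a linear operator on unweighted $L^p$ via the substitution $g(x)=f(x)(1-x^2)^\alpha$. The identity $(1+\cos t)^2 = 4\cos^4(t/2)$ identifies the outer factor $4/(\pi(1+y)^2)$ in the $\tau_y$-form of the operator as precisely $1/(\pi\cos^4(t/2))$, which is where the claimed denominator comes from; all subsequent estimates aim to show that the remaining integral kernel is bounded on $L^p$ with a constant independent of $t$.

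For $p=\infty$ I would use $|f(R)|(1-R^2)^\alpha\le\|f\|_{\infty,\alpha}$ inside the integral representation, reducing the claim to the pointwise kernel estimate
\[
(1-x^2)^{\alpha-1}\int_0^\pi\frac{|B_{\cos t}(x,\cos\varphi,R)|}{(1-R^2)^\alpha}\,d\varphi\le C,
\]
uniformly in $x\in(-1,1)$ and $t$. The expression $B_y=2(\cdots)^2-(1-R^2)$ would be expanded with the half-angle substitutions $x=\cos\theta$, $z=\cos\varphi$, $1-\cos t=2\sin^2(t/2)$, using the interpretation of $R$ as the cosine of a spherical distance, and the resulting integrals split according to whether $R$ is close to $\pm 1$ or bounded away.

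For $p=1$, Fubini turns the $L^1$-norm into a double integral over $(x,\varphi)$, and the change of variables $x\mapsto R$ (for each fixed $\varphi$) reduces the claim to the dual estimate
\[
\int_0^\pi\frac{(1-x^2)^{\alpha-1}|B_{\cos t}(x,\cos\varphi,R)|}{|\partial R/\partial x|}\,d\varphi\le C\,(1-R^2)^\alpha,
\]
uniformly in $R$ and $t$. Because $\theta\mapsto R$ may fold (its derivative $-\sin\theta\cos t-\cos\theta\sin t\cos\varphi$ may vanish in $[0,\pi]$), the $\theta$-interval must be split into monotone pieces, with an extra factor of two from the fold. Interpolating between $p=1$ and $p=\infty$ by Riesz--Thorin then yields the intermediate cases with the same $C/\cos^4(t/2)$ bound.

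The \textbf{main obstacle} is the pair of pointwise kernel estimates above. One must show that $B_y$ vanishes to sufficient order at the endpoint singularities $x,R\to\pm1$ to control the competing weights $(1-x^2)^{\alpha-1}$ and $(1-R^2)^{-\alpha}$, and that the Jacobian $|\partial R/\partial x|$ cancels appropriately against $B_y$ at the fold. The precise range of $\alpha$ in the hypothesis is expected to match exactly the borderline integrability of these weights, with the thresholds $\alpha>1-1/(2p)$ and $\alpha<3/2-1/(2p)$ arising from integrability at $|x|=1$ and $|R|=1$ respectively.
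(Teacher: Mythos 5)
The paper does not actually prove this lemma---it is imported verbatim from \cite{p-berisha:anal-99}---so your attempt has to stand on its own, and as written it is an outline rather than a proof. Your framing is sensible: the identity $(1+\cos t)^2=4\cos^4\frac t2$ does account for the factor $1/\Co t$, the reduction of the $p=\infty$ case to a pointwise bound on $(1-x^2)^{\alpha-1}\int_0^\pi|B_{\cos t}|(1-R^2)^{-\alpha}\,d\varphi$ is the right move, and the duality/change-of-variables formulation for $p=1$ (with the fold of $\theta\mapsto R$ at $\tan\theta=-\tan t\cos\varphi$) is correctly set up. But the entire analytic content---the order of vanishing of $B_y$ at $x,R\to\pm1$, the cancellation against $|\partial R/\partial x|$ at the fold, and the uniformity in $t$---is exactly what you label ``the main obstacle'' and then do not carry out. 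Those estimates \emph{are} the lemma; nothing that is actually verified in the proposal goes beyond bookkeeping.

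There is also a step that fails as stated: the passage to $1<p<\infty$ by Riesz--Thorin. After the substitution, the object being interpolated is the conjugated operator $S_\alpha g=(1-x^2)^\alpha\,\hat\tau_t\bigl((1-\cdot^2)^{-\alpha}g\bigr)$, which for fixed $\alpha$ is a single operator, and Riesz--Thorin needs \emph{both} endpoint bounds for that same $\alpha$. Your endpoint cases hold for $\alpha\in(1/2,1]$ when $p=1$ and $\alpha\in[1,3/2)$ when $p=\infty$; these ranges meet only at $\alpha=1$, so plain Riesz--Thorin recovers the intermediate claim only for $\alpha=1$ and misses, e.g., $p=2$, $\alpha=4/5\in(3/4,5/4)$. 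To get the full range $1-\frac1{2p}<\alpha<\frac32-\frac1{2p}$ you must let $\alpha$ move with the interpolation parameter: either Stein--Weiss interpolation with change of measure, or Stein's analytic-family theorem applied to $\alpha\mapsto S_{\alpha+is}$ (the arithmetic does work out, since $\theta=1/p'$ gives $\alpha=\alpha_0/p+\alpha_1/p'$, which sweeps exactly $\bigl(1-\frac1{2p},\frac32-\frac1{2p}\bigr)$ as $\alpha_0$ ranges over $(1/2,1]$ and $\alpha_1$ over $[1,3/2)$). Alternatively, drop interpolation and run a weighted Schur test directly at each $p$, which produces the two thresholds in one stroke; either way, the step needs to be replaced, not just filled in.
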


Lemma~\ref{lm:bound-tau} was proved in~\cite{p-berisha:anal-99}.

\section{Statement of results}

\begin{theorem}\label{th:EsubH-tau}
	Let~$p$, $\alpha$ and~$\lambda$ be given numbers
	such that
	$\allp$;
	\begin{alignat*}2
		1-\frac1{2p}	&<\alpha<\frac32-\frac1{2p}
			&\quad &\text{for $1\le p<\infty$},\\
		1				&\le\alpha<\frac32
			&\quad &\text{for $p=\infty$}.
	\end{alignat*}
	and $0<\lambda<2$.
	Let $f\in\Lp$.
	If
	\begin{displaymath}
		\E\le Mn^{-\lambda},
	\end{displaymath}
	then
	\begin{displaymath}
		\w\le CM\delta^\lambda,
	\end{displaymath}
	where constant~$C$
	does not depend on~$f$, $M$ and~$\delta$.
\end{theorem}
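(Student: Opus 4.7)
The plan is to prove this classical Stechkin-type inverse theorem via a dyadic decomposition of $f$ into near-best polynomial approximations. For each $m \ge 0$, pick an algebraic polynomial $T_m$ of degree less than $2^m$ with $\norm{f - T_m} \le 2 M 2^{-m\lambda}$; the triangle inequality then gives $\norm{T_{m+1} - T_m} \le C M 2^{-m\lambda}$. For fixed $\delta \in (0, \delta_0]$ (with $\delta_0$ sufficiently small) and $|t| \le \delta$, choose $N \in \numN$ with $2^{-N} \le \delta < 2^{-N+1}$ and, by linearity of $\hatT t{}{\cdot, x}$ (Lemma~\ref{lm:properties-tau}), split
\begin{displaymath}
	\hatT t{}{f,x} - f(x)
	= \bigl(\hatT t{}{f - T_N, x} - (f - T_N)(x)\bigr)
	+ \sum_{m=0}^{N-1} \bigl(\hatT t{}{T_{m+1} - T_m, x} - (T_{m+1} - T_m)(x)\bigr),
\end{displaymath}
where the contribution of the constant $T_0$ drops because $\hatT t{}{1,x} = 1$. (For $\delta > \delta_0$ the result follows routinely from Lemma~\ref{lm:bound-tau} since $\delta^\lambda$ is then bounded below.)

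On the tail piece, Lemma~\ref{lm:bound-tau} and the triangle inequality give
\begin{displaymath}
	\norm{\hatT t{}{f - T_N, x} - (f - T_N)(x)}
	\le C_1 \norm{f - T_N} \le C_2 M \delta^\lambda.
\end{displaymath}
For each polynomial summand I would invoke a Bernstein-type estimate of the form
\begin{displaymath}
	\norm{\hatT t{}{Q, x} - Q(x)} \le C (tn)^2 \norm Q
\end{displaymath}
for algebraic polynomials $Q$ of degree less than $n$, valid whenever $tn$ is bounded by a fixed constant (as holds here, since $t \le \delta \le 2^{-N+1}$ and $n \le 2^{m+1} \le 2^N$). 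This bounds the $m$-th summand by $C t^2 4^{m+1} M 2^{-m\lambda}$; summing the geometric series, whose convergence uses exactly the hypothesis $\lambda < 2$, yields a bound of order $t^2 \, 2^{N(2-\lambda)} M \asymp M \delta^\lambda$ by the choice of $N$. Taking the supremum over $|t| \le \delta$ then delivers $\w \le C M \delta^\lambda$.

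The main obstacle is establishing the Bernstein-type estimate itself. I would exploit the third property in Lemma~\ref{lm:properties-tau}, namely $\hatT t{}{\Px\nu, x} = \Px\nu(x) \Py\nu(\cos t)$, which shows that $\hatT t{}{\cdot, x}$ acts as a Fourier--Jacobi multiplier with symbol $\mu_\nu(t) = \Py\nu(\cos t)$. The pointwise estimate $|1 - \mu_\nu(t)| \le C\nu^2 t^2$ in the regime where $\nu t$ is bounded follows from $\Py\nu(1) = 1$ together with standard bounds on the derivative of Jacobi polynomials at the endpoint. Turning this symbol bound into the claimed $\Lp$-norm estimate is the delicate technical step; I would approach it either via a Jacobi multiplier theorem, or more elementarily by representing $I - \hatT t{}{\cdot, x}$ as an iterated integral in the parameter $t$, applying the standard Bernstein inequality for polynomials in the weighted space $\Lp$, and combining with Lemma~\ref{lm:bound-tau} for uniformity in $t$.
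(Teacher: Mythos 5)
Your scheme is the classical Stechkin-type inverse argument, and at the decisive step it is genuinely different from the paper's proof. The paper also decomposes $f$ into dyadic blocks $Q_k=P_{2^k}-P_{2^{k-1}}$, but it estimates each block using only the uniform boundedness of the shift (Lemma~\ref{lm:bound-tau}), obtaining $\norm{\hatT t{}{Q_k,x}-Q_k(x)}\le C M2^{-k\lambda}$ with no gain in $t$, and then concludes via the chain
\begin{displaymath}
	\sum_{k=1}^{N}2^{-k\lambda}\le C\,2^{-N\lambda}\le C\delta^{\lambda},
\end{displaymath}
whose first inequality is false: the left-hand side is bounded below by $2^{-\lambda}>0$ uniformly in~$N$, so the printed argument only yields $\w\le CM$ and never uses the hypothesis $\lambda<2$. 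Your version --- bounding each block by a Bernstein-type inequality $\norm{\hatT t{}{Q,x}-Q(x)}\le C(tn)^{2}\norm{Q}$, summing the geometric series $\sum_{m}t^{2}4^{m}2^{-m\lambda}\le Ct^{2}2^{N(2-\lambda)}$, and invoking $\lambda<2$ exactly there --- is the argument that actually produces the factor $\delta^{\lambda}$; the choice of $N$ and the tail estimate via Lemma~\ref{lm:bound-tau} are handled correctly.

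The genuine gap in your proposal is that this Bernstein-type inequality, which carries essentially the entire weight of the theorem, is asserted rather than proved, and it is not among the auxiliary statements available in the paper. The multiplier identity $a_\nu(\hatT t{}{f,x})=a_\nu(f)\Py\nu(\cos t)$ from Lemma~\ref{lm:properties-tau} and the pointwise bound $|1-\Py\nu(\cos t)|\le C\nu^{2}t^{2}$ for $\nu t=O(1)$ are sound, but for $p\ne2$ an estimate on the Fourier--Jacobi symbol does not transfer to an operator bound on $\Lp$ without further work: one needs either a multiplier theorem for Jacobi expansions in the weighted space, or an explicit representation of $f-\hatT t{}{f,x}$ (for instance through the second-order differential operator whose eigenfunctions are the $\Px\nu$, combined with the weighted Bernstein inequality for algebraic polynomials). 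Either route is a substantive lemma, comparable in difficulty to Theorem~\ref{th:jackson}, and must be stated and proved (or precisely cited) before your proof is complete. A smaller point: in your case $\delta>\delta_0$ you rely on Lemma~\ref{lm:bound-tau}, whose constant degenerates like $\Co t^{-1}$ as $|t|\to\pi$, so you should either restrict $\delta$ away from $\pi$ or say explicitly how the supremum over $|t|\le\delta$ is controlled there.
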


\begin{proof}
	Let $P_n(x)$ be an algebraical polynomial
	of degree not greater than $n-1$
	such that
	\begin{displaymath}
		\norm{f-P_n}=\E \quad(n=1,2,\ldots).
	\end{displaymath}
	We define algebraical polynomials $Q_k(x)$ by
	\begin{displaymath}
		Q_k(x)=P_{2^k}(x)-P_{2^{k-1}}(x)
		\quad (k=1,2,\ldots)
	\end{displaymath}
	and $Q_0(x)=P_1(x)$.
	Since for $k\ge1$
	\begin{multline*}
		\norm{Q_k}
		=\norm{P_2^k-P_{2^{k-1}}}
		\le\norm{P_{2^k}-f}+\norm{f-P_{2^{k-1}}}\\
		=\Epar{2^k}f+\Epar{2^{k-1}}f,
	\end{multline*}
	then by the conditions of the theorem we have
	\begin{equation}\label{eq:Qk-tau}
		\norm{Q_k}\le\Cn M2^{-k\lambda}.
	\end{equation}
	
	Taking into consideration property~\ref{it:properties-tau-4}
	in Lemma~\ref{lm:properties-tau}
	of the operator~$\tau_y$,
	without loss of generality
	we may suppose that $t\ne0$.
	For $0<|t|\le\delta$
	we estimate
	\begin{displaymath}
		I=\norm{\hatT t{}{f,x}-f(x)}.
	\end{displaymath}
	For every positive integer~$N$,
	taking into account property~\ref{it:properties-tau-1}
	in Lemma~\ref{lm:properties-tau}
	and the linearity of the operator $\T t{}{f,x}$,
	we get
	\begin{displaymath}
		I\le\norm{\hatT t{}{f-P_{2^N},x}-(f(x)-P_{2^N}(x))}
			+\norm{\hatT t{}{P_{2^N},x}-P_{2^N}(x)}.
	\end{displaymath}
	Since
	\begin{displaymath}
		P_{2^N}(x)=\sum_{k=0}^N Q_k(x),
	\end{displaymath}
	we have
	\begin{multline*}
		I\le\norm{\hatT t{}{f-P_{2^N},x}-(f(x)-P_{2^N}(x))}
			+\sum_{k=0}^N\norm{\hatT t{}{Q_k,x}-Q_k(x))}\\
		=J+\sum_{k=1}^N I_k.
	\end{multline*}
	
	Let~$N$ be chosen in such a way that
	\begin{equation}\label{eq:delta-tau}
		\frac\pi{2^N}<\delta\le\frac\pi{2^{N-1}}.
	\end{equation}
	We prove the following inequalities
	\boxconst{cn:J-tau}
	\begin{equation}\label{eq:J-tau}
		J\le\boxedconst M\delta^\lambda
	\end{equation}
	and
	\begin{equation}\label{eq:Ik-tau}
		I_k\le\Cn M2^{-k\lambda},
	\end{equation}
	where constants~$\boxedconst$ and~$\lastC$
	do not depend on~$f$, $M$, $\delta$ and~$k$.
	
	First we consider~$J$.
	By Lemma~\ref{lm:bound-tau},
	taking into account that $|t|\le\delta$,
	we have
	\begin{multline*}
		\norm{\hatT t{}{f-P_{2^N},x}-(f(x)-P_{2^N}(x))}
		\le\frac\Cn{\Copar{t}{}}\norm{f-P_{2^N}}\\
		=\Cn\Epar{2^N}f
	\end{multline*}
	Therefore, the condition of the theorem
	and inequality~\eqref{eq:delta-tau}
	yield
	\begin{displaymath}
		\norm{\hatT t{}{f-P_{2^N},x}-(f(x)-P_{2^N}(x))}
		\le\Cn M2^{-N\lambda}
		\le\Cn M\delta^\lambda,
	\end{displaymath}
	which proves inequality~\eqref{eq:J-tau}.
	
	Now we prove inequality~\eqref{eq:Ik-tau}.
	Note that, taking into consideration Lemma~\ref{lm:bound-tau},
	we have
	\begin{displaymath}
		\norm{\hatT t{}{Q_k}}
		\le\frac{\Cn}{\Copar{t}{}}\norm{Q_k}.
	\end{displaymath}
	Hence,
	\begin{displaymath}
		I_k
		\le\frac{\Cn}{\Copar{t}{}}M2^{-k\lambda},
	\end{displaymath}
	which proves inequality~\eqref{eq:Ik-tau}.
	
	Inequalities~\eqref{eq:J-tau}, \eqref{eq:Ik-tau}
	and~\eqref{eq:delta-tau}
	yield
	\begin{displaymath}
		I\le\Cn M\prn{\delta^\lambda
			+\sum_{k=1}^N 2^{-k\lambda}}
		\le\Cn M(
			\delta^\lambda
			+2^{-N\lambda}
		)\\
		\le\Cn M\delta^\lambda.
	\end{displaymath}
	
	Theorem~\ref{th:EsubH-tau} is proved.
\end{proof}

\begin{theorem}\label{th:HsubE-tau}
	Let~$p$, $\alpha$ and~$\lambda$ be given numbers
	such that $\allp$, $\lambda>0$;
	\begin{alignat*}2
		1-\frac1{2p}	&<\alpha<\frac32-\frac1{2p}
			&\quad \text{for $1\le p<\infty$},\\
		1				&\le\alpha<\frac32 
			&\quad \text{for $p=\infty$}.
	\end{alignat*}
	Let $f\in\Lp$.
	If
	\begin{displaymath}
		\w\le M\delta^\lambda,
	\end{displaymath}
	then
	\begin{displaymath}
		\E\le CMn^{-\lambda},
	\end{displaymath}
	where constant~$C$
	does not depend on~$f$, $M$ and~$n$.
\end{theorem}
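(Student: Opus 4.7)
The plan is to reduce Theorem~\ref{th:HsubE-tau} to a direct application of the Jackson-type inequality Theorem~\ref{th:jackson}; no genuinely new estimate is needed. Before setting up the argument, I would first check that the ranges of $p$ and $\alpha$ assumed here are covered by those of Theorem~\ref{th:jackson}. For $1<p<\infty$ and for $p=\infty$ the restrictions coincide exactly, while for $p=1$ the present hypothesis gives $1/2<\alpha<1$, which is strictly contained in the Jackson range $1/2<\alpha\le 1$. Thus Theorem~\ref{th:jackson} applies in every case listed in the statement.

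Given this compatibility, the body of the proof consists of a single substitution. For an arbitrary positive integer $n$, Theorem~\ref{th:jackson} supplies a constant $C_1>0$, depending only on $p$ and $\alpha$, such that
\begin{displaymath}
	C_1\E\le\wpar{f,1/n}.
\end{displaymath}
Evaluating the hypothesis $\w\le M\delta^\lambda$ at $\delta=1/n$ gives $\wpar{f,1/n}\le M n^{-\lambda}$, so that $\E\le(M/C_1)\,n^{-\lambda}$. Taking $C=1/C_1$ then yields the required estimate with a constant independent of $f$, $M$, and $n$, and finishes the proof.

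I anticipate no substantive obstacle. All the genuine analytic content, such as the construction of near-best polynomial approximants and the norm bounds for the asymmetric shift operator, has already been absorbed into Theorem~\ref{th:jackson} (proved in the cited earlier papers). In effect, Theorem~\ref{th:HsubE-tau} is the rephrasing of the Jackson-type inequality as an inclusion $H(p,\alpha,\lambda)\subset E(p,\alpha,\lambda)$, dual to the inclusion $E(p,\alpha,\lambda)\subset H(p,\alpha,\lambda)$ established in Theorem~\ref{th:EsubH-tau}; only the compatibility of hypotheses noted above requires any verification, and unlike in the converse direction, no restriction of the form $\lambda<2$ is needed here, because the $\delta$-parameter is used only at one scale $\delta=1/n$.
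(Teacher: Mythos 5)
Your proof is correct and is essentially identical to the paper's: the paper likewise sets $\delta=1/n$ and applies Theorem~\ref{th:jackson} directly to get $\E\le\frac1{C_1}\wpar{f,1/n}\le CMn^{-\lambda}$. Your additional check that the $\alpha$-ranges are compatible with those of Theorem~\ref{th:jackson} (including the $p=1$ case) is a sensible verification that the paper leaves implicit.
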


\begin{proof}
	Let $\delta=\frac1n$.
	Then,
	taking into account Theorem~\ref{th:jackson},
	we obtain
	\begin{displaymath}
		\E\le\frac1{\Cn}\wpar{f,\frac1n}
		\le CMn^{-\lambda}.
	\end{displaymath}
	
	Theorem~\ref{th:HsubE-tau} is proved.
\end{proof}

\begin{theorem}\label{th:coincidence-tau}
	Let~$p$, $\alpha$ and~$\lambda$ be given numbers
	such that
	$\allp$;
	\begin{alignat*}2
		1-\frac1{2p}	&<\alpha<\frac32-\frac1{2p}
			&\quad &\text{for $1\le p<\infty$},\\
		1				&\le\alpha<\frac32 
			&\quad &\text{for $p=\infty$}.
	\end{alignat*}
	Then for $0<\lambda<2$
	the classes of functions $H(p,\alpha,\lambda)$
	coincide with the class $E(p,\alpha,\lambda)$.
\end{theorem}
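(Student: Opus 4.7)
The plan is to deduce the coincidence of the two function classes directly from the two preceding theorems, which respectively implement an inverse-type estimate and a Jackson-type estimate relating best approximation and the generalised modulus of smoothness.

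First I would establish the inclusion $E(p,\alpha,\lambda)\subseteq H(p,\alpha,\lambda)$ by invoking Theorem~\ref{th:EsubH-tau}. Given $f\in E(p,\alpha,\lambda)$, one has $\E\le Mn^{-\lambda}$, and the hypotheses on $p$ and $\alpha$ in the present theorem match exactly those of Theorem~\ref{th:EsubH-tau}; since moreover $0<\lambda<2$, that theorem applies and delivers $\w\le CM\delta^\lambda$, which places $f$ in $H(p,\alpha,\lambda)$.

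Next I would establish the reverse inclusion $H(p,\alpha,\lambda)\subseteq E(p,\alpha,\lambda)$ by invoking Theorem~\ref{th:HsubE-tau}. Given $f\in H(p,\alpha,\lambda)$, the assumption $\w\le M\delta^\lambda$, together with the matching hypotheses on $p$ and $\alpha$, yields $\E\le CMn^{-\lambda}$, so $f\in E(p,\alpha,\lambda)$.

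There is no serious obstacle in this final argument, since the genuine analytical work has already been carried out in the proofs of Theorems~\ref{th:EsubH-tau} and~\ref{th:HsubE-tau}. The only subtlety worth flagging is that the restriction $\lambda<2$ is inherited entirely from the inverse-type Theorem~\ref{th:EsubH-tau}, whereas the Jackson-type Theorem~\ref{th:HsubE-tau} requires only $\lambda>0$; this asymmetry is characteristic of approximation problems of this kind and reflects the saturation phenomenon that caps the range of smoothness exponents detectable by the modulus $\w$.
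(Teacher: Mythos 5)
Your proposal is correct and follows exactly the paper's own argument: both inclusions are read off directly from Theorem~\ref{th:EsubH-tau} and Theorem~\ref{th:HsubE-tau}, whose hypotheses match those of Theorem~\ref{th:coincidence-tau}. Your remark on the asymmetry of the condition $0<\lambda<2$ is a pertinent observation that goes slightly beyond what the paper states, but the substance of the proof is the same.
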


\begin{proof}
	Note that, under the condition of the theorem,
	Theorem~\ref{th:HsubE-tau} implies the inclusion
	\begin{displaymath}
		H(p,\alpha,\lambda)\subseteq E(p,\alpha,\lambda),
	\end{displaymath}
	while Theorem~\ref{th:EsubH-tau} implies the converse inclusion
	\begin{displaymath}
		E(p,\alpha,\lambda)\subseteq H(p,\alpha,\lambda).
	\end{displaymath}
	Hence we conclude
	that the assertion of Theorem~\ref{th:coincidence-tau}
	is implied by Theorems~\ref{th:HsubE-tau}
	and~\ref{th:EsubH-tau}.
\end{proof}

Note that analogues of Theorems~\ref{th:HsubE-tau},
\ref{th:EsubH-tau} and~\ref{th:coincidence-tau}
for another generalised shift operator
were proved in~\cite{berisha:math-98}
and, in more general forms,
in~\cite{p-berisha:east-98,b-berisha:anlysis-12}.

\bibliographystyle{amsplain}
\bibliography{maths}

\end{document}